\newtheorem*{rep@theorem}{\rep@title}
\newcommand{\newreptheorem}[2]{%
\newenvironment{rep#1}[1]{%
 \def\rep@title{#2 \ref{##1}}%
 \begin{rep@theorem}}%
 {\end{rep@theorem}}}
\newtheorem{theorem}{Theorem}
\newtheorem{proposition}[theorem]{Proposition}
\newtheorem{lemma}[theorem]{Lemma}
\newtheorem{corollary}[theorem]{Corollary}
\theoremstyle{definition}
\newcommand{\Dknot}{\prescript{1}{}{\mathcal{D}}}
\newcommand{\AKT}{\prescript{1}{}{\!\mathcal{A}}}
\newcommand{\akt}{\prescript{1}{}{\mathcal{T}}}
\newcommand{\AKTnum}{(\prescript{1}{}{\!A})}
\newcommand{\aktnum}{(\prescript{1}{}{a})}
\newcommand{\Xtang}{\mathfrak{X}}
\space\url{#1}%
\begin{document}
\title[]{On the structure and scarcity of alternating knots}
\author{Harrison Chapman} \email{hchaps@gmail.com}
\address{Department of Mathematics\\
 Colorado State University, Fort Collins CO}
\date{\today}

\begin{abstract}  
  Given a class of objects, a pattern theorem is a powerful result describing their structure. We show that alternating knots exhibit a pattern theorem, and use this result to prove a long-standing conjecture that alternating knots grow rare. This is currently the best possible analogue of a pair of theorems on alternating links of Sundberg and Thistlethwaite in 1998 and Thistlethwaite in 1998, given the current obstructions to an exact enumeration of knot diagrams. We also discuss implications of this pattern theorem for subknots and slipknots in minimal alternating knot diagrams and types, partially answering a conjecture of Millett and Jablan.
\end{abstract}

\maketitle

Following Menasco and Thistlethwaite's proof of the Tait flyping conjecture for alternating links~\cite{Menasco_1991,Menasco_1993}, Sundberg and Thistlethwaite proved an impressive formula~\cite{Sundberg_1998} (see also~\cite{Zinn_Justin_2002} for an illuminating alternate proof using techniques from random matrix theory) for the exponential growth rate of the number \(A_n\) of alternating prime link types, that:
\begin{equation}
  \lim_{n\to\infty}{A_n^{1/n}} = \frac{101+\sqrt{21001}}{40}.
  \label{eq:altlinktypecount}
\end{equation}
Soon after, Thistlethwaite leveraged similar techniques to prove that alternating prime link types are asymptotically exponentially rare among all prime link types~\cite{Thistlethwaite_1998}.

In 1998 Hoste, Thistlethwaite, and Weeks published the results of their impressive tabulation of all 1,701,936 knot types up to 16 crossings~\cite{Hoste98}. Their data provide \emph{exact} ratios of alternating knot types to general knot types; \emph{all} knot types are alternating until 8 crossings, at which case the ratios decrease so that 27\% of 16-crossing knot types are alternating. From this they mention that, ``it is plausible that the proportion of knots which are alternating tends exponentially to zero with increasing crossing number.'' Two decades later, this claim has yet to see proof. Our goal herein is to concretely show:
\begin{theorem}
  All but exponentially few prime knot types are nonalternating.
  \label{thm:altknotrare}
\end{theorem}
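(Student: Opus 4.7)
The plan is to pair the Sundberg--Thistlethwaite formula \eqref{eq:altlinktypecount}, which caps the exponential growth of alternating prime knot types by \(\alpha := (101+\sqrt{21001})/40\), with a surgery construction driven by the pattern theorem promised earlier in the paper to manufacture many more non-alternating prime knot types.

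For the upper bound, since every alternating prime knot type is in particular an alternating prime link type, the count of alternating prime knots on \(n\) crossings is dominated by \(A_n\) and grows at most like \(\alpha^n\). For the lower bound on all prime knots, I would fix an alternating tangle \(P\) together with a non-alternating replacement tangle \(P'\) having the same number of crossings and the same boundary connectivity, chosen so that: \textbf{(i)} any knot diagram in which \(P'\) appears fails to be alternating and, more strongly, represents a non-alternating knot type (for instance, by arranging that no flype sequence applied to the resulting diagram can expose an alternating form, so the Tait flyping theorem rules out an alternating representative); \textbf{(ii)} the substitution \(P \mapsto P'\) preserves primeness of the ambient knot; \textbf{(iii)} distinct choices of which \(P\)-copies to replace produce distinct knot types.

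With \(P\) so chosen, the pattern theorem guarantees that all but an exponentially small proportion of prime alternating knot diagrams on \(n\) crossings contain at least \(cn\) pairwise-disjoint copies of \(P\) for some \(c>0\). Surgering an arbitrary subset of these copies yields \(2^{cn}\) non-alternating prime diagrams from each base diagram. Since Sundberg--Thistlethwaite's argument is in fact a diagram count, and since by the Tait flyping theorem each alternating knot type has only a polynomial-in-\(n\) number of minimal diagrams, one obtains at least on the order of \(\alpha^{n}\cdot 2^{cn}/\mathrm{poly}(n)\) non-alternating prime knot types on \(n\) (or at most \(n+O(1)\)) crossings, beating the \(\alpha^n\) upper bound on alternating prime knots by an exponential factor and hence proving Theorem \ref{thm:altknotrare}.

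The main obstacle is step (iii): verifying that different subsets of replacements really give pairwise distinct knot types. Unlike the alternating setting, where the Tait flyping theorem provides a rigid uniqueness of minimal diagrams, the non-alternating outputs have no such classification, so distinctness must be certified by a local invariant detectable at each individual surgery site. My preferred route would be to select \(P'\) so that its Kauffman-bracket or HOMFLY-PT contribution contributes a monomial that is linearly independent of that of \(P\) and cannot cancel across independent surgeries; combined with the locality of the pattern theorem and a careful check that the replaced tangles sit in a non-separating region of the diagram (which is where primeness in step (ii) must also be monitored), this should give the needed injectivity. Establishing (i) cleanly --- that is, ruling out the \emph{knot type} being alternating, not merely the specific diagram --- is the second delicate point and is what forces the choice of \(P'\) to be made in conjunction with a flyping analysis.
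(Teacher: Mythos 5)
Your overall architecture matches the paper's: use a pattern theorem to locate \(cn\) disjoint copies of an alternating tangle, swap arbitrary subsets for a nonalternating tangle, and count \(2^{cn}\) nonalternating types per base object. But the two steps you yourself flag as delicate are exactly where the proposal does not close, and your proposed fixes do not work as stated. First, the injectivity step (iii): certifying distinctness via Kauffman-bracket or HOMFLY-PT ``contributions'' of the surgered sites is not a local computation --- the bracket state sum does not decompose over disjoint tangle sites in a way that prevents cancellation or coincidence across the \(2^{cn}\) subsets, and nothing in your sketch controls this. The paper's resolution is structural rather than invariant-theoretic: the replacement tangle \(\overline R\) is chosen so that every surgered diagram lands in a superclass \(\Dknot\) of (generally nonalternating) diagrams that still satisfies the Tait flyping theorem, in the spirit of Thistlethwaite's 1998 rarity argument for links. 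That single choice simultaneously delivers (i) that the resulting \emph{types} are nonalternating, (ii) primeness, (iii) distinctness of all \(2^{cn}\) outputs and of outputs coming from distinct base types, and the needed control of crossing number.

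Second, your passage from diagrams to types is incorrect: an alternating knot type does \emph{not} in general have only polynomially many reduced alternating diagrams. A diagram with \(k\) independent flype sites can have on the order of \(2^{k}\) distinct minimal diagrams (for instance \(n/2\) twist regions of two crossings each), so dividing a diagram count by \(\mathrm{poly}(n)\) is unjustified, and since the constant \(c\) from the pattern theorem is not explicit, this exponential multiplicity could in principle swallow the entire \(2^{cn}\) gain. This is precisely why the paper proves a pattern theorem for alternating knot tangle \emph{types} (Theorem~\ref{thm:aktpatterntheorem}), via a canonical standard-form diagram for each flype class, instead of quoting the diagram-level Theorem~\ref{thm:akdpatterntheorem} and dividing by multiplicity. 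If you start from the type-level pattern theorem and replace your invariant-based injectivity argument with the flyping-conjecture-for-a-superclass argument, you recover the paper's proof; as written, both the injectivity step and the diagram-to-type conversion are genuine gaps.
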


This case of knots is far more difficult: The proof of Equation~\ref{eq:altlinktypecount} relies heavily on Tutte's~\cite{Tutte1963} exact enumeration of 4-valent planar maps. There is as of yet no similar result for alternating prime knot types. Any likely proof strategy would require an enumeration of the subclass of 4-valent planar maps called plane curves, which does not yet exist beyond conjecture~\cite{Schaeffer2004} and experiment~\cite{Jacobsen2002,ZinnJustin2009,Chapman2016:mc}. Note that if the exponential growth rate of the number of knot types exists, it lies within the (best-known) bounds of 2~\cite{Ernst_1987} and 10.39~\cite{Stoimenow_2004}.

In the absence of such a result, particularly in the study of knotted objects, most knowledge is based on ``pattern theorems''~\cite{Kesten1963,Kesten1964} which describe the structure of almost all large objects. Such pattern theorems are in some sense a relaxation of a precise enumeration. Notably, pattern theorems lie at the heart of many proofs of the asymptotic certainty of knotting~\cite{Sumners_1988,Pippenger89,Diao_1994,Chapman2016} in a wide variety of models of random knots. For an overview of random models of knots, see for instance~\cite{Orlandini07,Even_Zohar_2017}. In proving Theorem~\ref{thm:altknotrare} we hope additionally to convince the reader that pattern theorems are a powerful tool capable of answering topological and geometric questions with an underlying combinatorial structure.

The following pattern theorem has been known about reduced prime alternating knot diagrams through their bijection with prime plane curves~\cite{Chapman2016}:
\begin{theorem}
  Let \(P\) be a prime tangle which may be found in a reduced prime alternating knot diagram. Then there exists a constant \(c > 0\) so that all but exponentially few reduced prime alternating knot diagrams with \(n\) crossings contain \(cn\) instances of \(P\).
  \label{thm:akdpatterntheorem}
\end{theorem}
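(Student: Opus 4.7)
The plan is to transfer a pattern theorem for prime plane curves to the setting of reduced prime alternating knot diagrams via the standard shadow bijection, so the first move is to make this translation precise. Recall that a reduced prime alternating knot diagram on $S^2$ is determined, up to the two possible alternating colorings of its crossings, by its underlying 4-regular planar shadow, and that when the diagram is reduced and prime this shadow is a prime plane curve; conversely every prime plane curve lifts to exactly two reduced prime alternating knot diagrams (related by global mirror symmetry). Under this correspondence a prime tangle $P$ appearing in a diagram becomes a prime sub-tangle $\bar P$ of the shadow, and once an alternating coloring is fixed, occurrences of $P$ in the diagram correspond bijectively to occurrences of $\bar P$ in the plane curve. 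Thus it suffices to prove the analogous pattern theorem for prime plane curves.

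From here I would follow the classical Kesten strategy. First I would establish a concatenation operation on prime plane curves, for instance gluing two curves along marked edges through a single fixed junction, yielding a supermultiplicative inequality of the form $c_m c_n \le K \cdot c_{m+n+O(1)}$ on the sequence $c_n$ counting prime plane curves with $n$ crossings; this guarantees existence of the exponential growth rate $\mu := \lim c_n^{1/n}$. Next I would define a pattern-insertion operation: at sufficiently many edges of a prime plane curve one can splice in a copy of $\bar P$ so that the result is again a prime plane curve, with the associated map from (curve, site) pairs to larger marked curves being at most boundedly many-to-one. Together these two ingredients feed into the standard Kesten dichotomy: prime plane curves containing fewer than $cn$ copies of $\bar P$ have exponential growth rate strictly less than $\mu$ for some $c > 0$, so such curves are exponentially rare. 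Pulling this back across the shadow bijection then yields Theorem~\ref{thm:akdpatterntheorem}.

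The hard part is verifying that the pattern-insertion step preserves primality of the plane curve, since unlike the self-avoiding walk setting the relevant condition is not local in a graph-theoretic sense but global in the planar structure. To handle this I would exploit the hypothesis that $P$ already embeds in some reduced prime alternating knot diagram $D_0$: a small neighbourhood of $P$ inside $D_0$ gives a four-ended sub-tangle whose boundary meets the shadow transversely in four points and which cannot be further split by an essential disk. Splicing such a neighbourhood into any other prime plane curve along an edge lying in no short essential loop then yields a new prime plane curve, and counting the available sites shows that the insertion has essentially linear capacity in the number of crossings. Once these primality-bookkeeping details are in place, the rest of the Kesten argument runs as usual and the pattern theorem for diagrams follows.
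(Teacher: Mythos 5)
Your proposal follows essentially the same route as the paper, which obtains Theorem~\ref{thm:akdpatterntheorem} precisely by passing through the bijection between reduced prime alternating knot diagrams and prime plane curves and invoking the Kesten-style pattern theorem machinery for such diagrams developed in the cited earlier work (the ``general diagram pattern theorem'' used again later in the paper). The shadow-bijection translation, the supermultiplicative concatenation, and the linear-capacity insertion of a primality-preserving sub-tangle are exactly the ingredients of that argument, so your outline is correct and matches the intended proof.
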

By accounting for flypes, we will prove the stronger result for alternating prime knot \emph{types}:
\begin{theorem}
  Let \(P\) be a prime 3-edge-connected alternating tangle diagram. Then there exists a constant \(c > 0\) so that all but exponentially few prime alternating knot types \(K\) have a minimal alternating diagram which contains at least \(cn\) instances of \(P\). Furthermore, \emph{every} minimal diagram for such an alternating knot type contains at least \(cn\) copies of \(P\) or its reflections.
  \label{thm:aktpatterntheorem}
\end{theorem}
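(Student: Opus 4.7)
The plan is to reduce Theorem~\ref{thm:aktpatterntheorem} to the diagram-level pattern theorem (Theorem~\ref{thm:akdpatterntheorem}) via the Menasco--Thistlethwaite flyping theorem; the 3-edge-connectedness hypothesis on $P$ is exactly what makes the reduction work.

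The first and most technical step is a \emph{flype-invariance lemma}: if two reduced alternating knot diagrams $D$ and $D'$ differ by a single flype, then the total number of sub-tangle copies of $P$ or a reflection of $P$ agrees in $D$ and $D'$. A flype is supported inside a 2-string tangle $T \subset D$ whose boundary is a 4-edge cut of the diagram with a fixed pairing of its four endpoints into two strands; it rotates the interior of $T$. A copy of $P$ that strictly straddled $\partial T$ would produce a cut of $P$ using at most four edges, and the 2-string pairing further constrains the shape of the cut. One must verify that 3-edge-connectedness of $P$, together with its primality, rules out any such configuration, so that every copy of $P$ lies either entirely inside $T$ (where the flype rotates it into a reflection of $P$) or entirely outside $T$ (where it is preserved verbatim). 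Working through this case analysis carefully is the main obstacle.

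Granting the lemma, the Menasco--Thistlethwaite flyping theorem implies that the set of reduced alternating diagrams of any fixed prime alternating knot type forms a single flype-equivalence class. Therefore the count of copies of $P$ plus copies of reflections of $P$ is a knot-type invariant, and the second sentence of Theorem~\ref{thm:aktpatterntheorem} follows at once from the first.

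Finally I would compare counts. Write $D_n$ for the number of $n$-crossing reduced prime alternating knot diagrams and $K_n$ for the number of $n$-crossing prime alternating knot types, and let $B_n$ be the number of ``bad'' knot types (those no minimal diagram of which contains at least $cn$ instances of $P$). Each bad knot type yields at least one bad diagram, so $B_n$ is bounded by the number of bad diagrams, which Theorem~\ref{thm:akdpatterntheorem} bounds by $o(D_n \cdot e^{-\epsilon n})$ for some $\epsilon = \epsilon(c) > 0$. On the other hand $K_n \geq D_n / f(n)$ where $f(n)$ bounds flype-equivalence class sizes; standard structural results on flypes (as developed by Sundberg--Thistlethwaite) make $f(n)$ at worst subexponential. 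Choosing $c$ small enough that $\epsilon(c)$ exceeds the exponential growth rate of $f(n)$ then gives $B_n / K_n \to 0$ exponentially, completing the proof.
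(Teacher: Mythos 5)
Your final counting step contains a genuine gap that the paper is specifically constructed to avoid. You assert that the maximal flype-equivalence class size \(f(n)\) is ``at worst subexponential,'' but this is false: flype classes can be exponentially large in \(n\). A reduced alternating diagram can contain linearly many independent maximal horizontal tangle sums, and in each one the flypable crossings can be distributed among the summands in several ways; the class size is the product of these independent choices, hence exponential. Concretely, the growth rate of reduced prime alternating diagrams (governed by 4-valent maps / plane curves) strictly exceeds the growth rate of alternating types, so \(D_n/K_n\) itself grows exponentially. Your bound \(B_n/K_n \le \delta^n \cdot (D_n/K_n)\) is therefore useless unless \(\delta\) beats that ratio, and you have no control over \(\delta\): in Kesten-style pattern theorems, shrinking \(c\) does not drive the exceptional rate \(\delta\) to zero, so ``choosing \(c\) small enough'' does not rescue the argument. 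This is exactly the ``subtle distinction'' the paper flags between Theorems~\ref{thm:akdpatterntheorem} and~\ref{thm:aktpatterntheorem}: the exponentially few bad diagrams could, a priori, be spread over an exponentially large fraction of knot types.

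The paper's route is structurally different. It first proves that the growth rate of alternating knot \emph{tangle} types exists (via a supermultiplicative composition), then constructs a canonical \emph{standard form} diagram for each reduced alternating knot tangle type --- pushing all flypable crossings to the right of each oriented maximal horizontal tangle sum --- yielding a bijection between types and a class of diagrams. The pattern theorem is then proved \emph{directly on this class} by a new attachment scheme: an annular shield \(\Xtang\) containing \(R\) is inserted at any of \(n\) rooted dual 4-cycles chosen compatibly with the standard form, and one checks that standard form is preserved, so the generic pattern-theorem machinery applies to types themselves with no diagram-to-type transfer needed. Your flype-invariance lemma (3-edge-connectedness prevents a copy of \(P\) from straddling a flype disk, so flypes only move or reflect copies) is sound in spirit and is essentially what justifies the ``furthermore, every minimal diagram'' clause; but it cannot substitute for the missing quantitative control over flype-class sizes in the main estimate.
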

The distinction is subtle: Theorem~\ref{thm:akdpatterntheorem} allows for the unlikely possibility that only diagrams for a relatively small number of alternating knot types obey a pattern theorem. Instead, we prove the latter, stronger theorem using a new pattern theorem construction.

After showing that alternating knot types are rare, we conclude with an application of the pattern theorems to a pair of conjectures of Millett and Jablan~\cite{Millett_2016,Millett_2017}: That almost every prime knot diagram contains a trefoil as a subknot (Conjecture 1) and as a slipknot (Conjecture 2). Our pattern theorems provide answers in the case of alternating knots.

\section{Definitions}
\label{sec:definitions}

A \emph{link diagram} is a 4-valent spherically embedded multigraph (planar map), together with over- undercrossing data at each vertex, called a \emph{crossing}. A \emph{link component} of a diagram is an equivalence class of edges, modulo meeting on opposite sides of a crossing. A link diagram is a \emph{knot diagram} if it has precisely one link component. A diagram is \emph{reduced} if it has no disconnecting vertices.

A \emph{knot type} is an equivalence class of diagrams under local Reidemeister moves, and Reidemeister's theorem~\cite{Alexander_1926} equates this with the usual theory of time knots in space. A knot diagram is \emph{alternating} if, when following the knot component around some orientation, crossings are encountered in the periodic over-under sequence. A knot type is alternating if some representative knot diagram is alternating. Knot types admit prime decompositions under the binary connected sum relation, and a knot type is \emph{prime} if it has only one component under this decomposition.

Let \(\AKT = \bigcup_{n} {\AKT_n}\) denote the class of alternating prime knot types, grouped by minimal crossing number, and let \(\AKTnum_n = |\AKT_n|\) be their counting sequence.

A \emph{tangle diagram} is a 4-valent multigraph embedded in the disk \(D^2\) (planar map with boundary) with four extremal vertices on \(\partial B\), together with over- and undercrossing information at each interior vertex. A \emph{tangle type} is an equivalence class of tangle diagrams under the Reidemeister moves in the disk interior. Tangle diagrams are \emph{alternating} if every sequence of signs along a component alternates between threading over and under. Alternating reduced tangle diagrams, like alternating reduced link diagrams, satisfy the Tait flyping conjecture~\cite{Menasco_1991,Menasco_1993,Sundberg_1998}; two alternating reduced tangle diagrams represent the same tangle type if and only if they are related by a series of flype moves that leave \(\partial B\) fixed. Furthermore, this implies that an alternating diagram is minimal if and only if it is reduced.

Except otherwise mentioned, all diagrams discussed in this paper will be alternating. This permits us to draw figures of flat tangle ``shadows'', where the crossing signs are then determined by the external structure. In diagrams representing tangles, the convention will be that the first crossing that the lower-left leg passes will be \emph{over}. In the case of substructure, it will be assumed that the crossing signs are consistent with the alternating pattern.

\begin{figure}
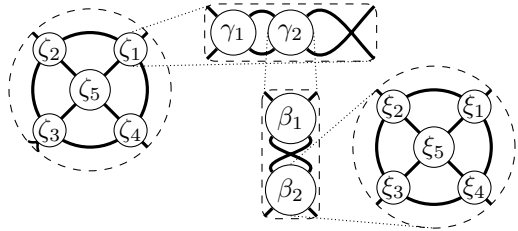

  \centering
  \includestandalone{figs/tangle_decomposition}
  \caption{Tangles can be decomposed into finite, repeated compositions of tangles of types I and II.}
  \label{fig:tangledecomp}
\end{figure}

We briefly describe the decomposition of tangles of~\cite{Sundberg_1998} which is summarized in Figure~\ref{fig:tangledecomp}. Two tangles admit horizontal and vertical \emph{tangle sum}, described in Figure~\ref{fig:tanglesum}. Broadly, a tangle is \emph{type II} if it is either a crossing or a tangle sum of two nontrivial tangles; otherwise it is \emph{type I}. If a tangle is type I, then one can identify the interior dual 4-cycles which are maximal in the sense that, if all such dual 4-cycle interiors are collapsed simultaneously to vertices, one obtains a disk embedded multigraph with no faces of degree 1 or 2 (called a \(c\)-net or a Conway polyhedron). The interiors of each such dual 4-cycle is itself a tangle. On the other hand, a type II tangle is either simply a crossing, or it can be viewed as either a horizontal or vertical tangle sum of tangles. In the horizontal sum decomposition, any crossings will be called \emph{flypable crossings}.

\begin{figure}
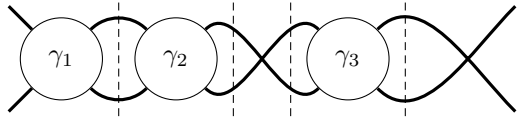

  \centering
  \includestandalone{figs/tangle_sum}
  \caption{A horizontal tangle sum of nontrivial tangles \(\gamma_1, \gamma_2, \gamma_3\) and two flypable crossings.}
  \label{fig:tanglesum}
\end{figure}

Tangle diagrams and types have the counterclockwise ordering of their endpoints fixed. If we root the tangle at the lower-left hand corner, there are three classes of tangle distinguished by where the strand rooted at the lower-left ends. If its other endpoint is at;
\begin{enumerate}
\item the lower-right hand corner, the tangle is be called \emph{horizontal},
\item the upper-left hand corner, the tangle is called \emph{vertical}, or
\item the upper-right hand corner, the tangle is called \emph{crossing}.
\end{enumerate}
Note that the numbers of vertical and horizontal tangles are the same by considering a \(90^\circ\) rotation and a re-rooting. These three subclasses are depicted in Figure~\ref{fig:altknot_tangleclasses}.

\begin{figure}
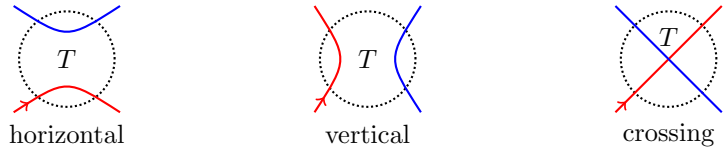

  \centering
  \includestandalone{figs/altknot_tangleclasses}
  \caption{The three subclasses of tangle by exterior strand arrangment.}
  \label{fig:altknot_tangleclasses}
\end{figure}

Tangle diagrams may be closed into links through the one-crossing closure, see Figure~\ref{fig:altknot_closure}. This also defines a projection from alternating tangle types to alternating link types. A tangle (diagram, type) which closes to a knot (diagram, type) (i.e. has precisely one link component after closure) will be called a \emph{knot tangle (diagram, type)}. The class of knot tangle diagrams is exactly the union of the sets of horizontal and vertical tangles which have \emph{no interior link components}. On the other hand, notice that under the one-crossing closure, a crossing-class tangle necessarily has at least two closed link components and cannot be a knot. Let \(\akt = \bigcup_{n} {\akt_n}\) be the number of alternating prime knot tangle types counted by minimal crossing number, and let \(\aktnum_n = |\akt_n|\) be their counting sequence.

\begin{figure}
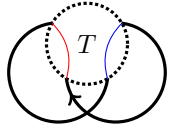

  \centering
  \includestandalone{figs/altknot_closure}
  \caption{The one-crossing closure of an alternating tangle.}
  \label{fig:altknot_closure}
\end{figure}

As alternating prime knot types are a subclass of alternating prime link types and alternating prime knot tangle types are a subclass of alternating prime tangle types, Corollary 3.3.1 of Sundberg and Thistlethwaite~\cite{Sundberg_1998} provides the relation:

\begin{proposition} One has that,
  \[ \frac{\aktnum_{n-1}}{8(2n-3)} \le \AKTnum_n \le \frac{\aktnum_{n-1}}{2}. \]
  \label{thm:aktknottanglinear}
\end{proposition}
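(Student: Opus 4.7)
The plan is to derive the inequality directly from Corollary 3.3.1 of Sundberg--Thistlethwaite by restricting their argument to the knot subclass. Their result is the analogous bound $\tilde t_{n-1}/(8(2n-3)) \le L_n \le \tilde t_{n-1}/2$ for alternating prime \emph{link} tangle types and \emph{link} types. I need to check that each step in their bounding argument preserves the one-component property, so that the restriction of the one-crossing closure to knot tangles yields a map whose fibers are controlled by the same constants.

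First I would record the two compatibility observations that justify the restriction. A knot tangle diagram has, by definition, one link component after the one-crossing closure, so the closure map sends $\akt_{n-1}$ into $\AKT_n$. Conversely, opening any crossing of a minimal alternating knot diagram produces a tangle diagram with $n-1$ crossings whose closure recovers the knot diagram, hence the tangle is itself a knot tangle. Consequently the one-crossing closure map $\akt_{n-1} \twoheadrightarrow \AKT_n$ is well defined and surjective, and its fibers are precisely the intersections with $\akt_{n-1}$ of the fibers of the corresponding link-level map considered by Sundberg--Thistlethwaite.

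Next I would transfer their fiber estimates. For the upper bound one shows that every knot type $K \in \AKT_n$ has at least two preimages in $\akt_{n-1}$. Pick a minimal alternating diagram $D$ of $K$; by primality and Sundberg--Thistlethwaite's type I/II decomposition, $D$ contains at least two crossings whose openings yield tangle diagrams lying in distinct flype equivalence classes. By the Tait flyping conjecture these represent distinct knot tangle types, giving $\aktnum_n \ge 2 \AKTnum_n$. For the lower bound I would count in the opposite direction: any knot tangle type $T \in \akt_{n-1}$ closing to $K$ is determined, up to flype equivalence, by a choice of a crossing to open in a minimal diagram of $K$ together with a dihedral framing at the boundary disk. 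The number of such choices is at most $8(2n-3)$ (the factor $2n-3$ coming from the location of the marked crossing within its type II chain and the factor $8$ from rotations and reflections of the boundary square of the disk), giving $\aktnum_{n-1} \le 8(2n-3)\AKTnum_n$.

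The only real work is checking that all the equivalences and operations used in the link argument (flypes, horizontal/vertical tangle sums, dihedral action on the boundary) preserve the number of link components, so that the restriction to knot tangles and knots is compatible. Since a flype is an ambient isotopy of the tangle interior and the one-crossing closure is determined by the boundary matching, this verification is routine; the substantive content is already in Sundberg--Thistlethwaite. The expected obstacle is therefore not a combinatorial difficulty but an expository one: making explicit the two observations above and checking that each equivalence relation used in their fiber count respects the ``one component after closure'' condition.
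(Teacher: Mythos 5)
Your proposal is correct and follows essentially the same route as the paper, which simply invokes Corollary 3.3.1 of Sundberg--Thistlethwaite and observes that the subclass relations (knot tangles close to knots, and opening a crossing of a minimal alternating knot diagram yields a knot tangle) carry the bounds over; your write-up just makes those compatibility checks explicit. The only slip is notational: your upper-bound conclusion should read \(\aktnum_{n-1} \ge 2\,\AKTnum_n\), not \(\aktnum_{n} \ge 2\,\AKTnum_n\).
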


We will hence focus our attention to alternating knot tangles, from which alternating knot results will follow immediately.

\section{Results}
\label{sec:results}

\subsection{Patterns in alternating knot types}
\label{sec:patternthm}

For a strong pattern theorem result on alternating knot tangle types, existence of their exponential growth rate must be known to exist (for the knot diagram setting, see~\cite{Chapman2016}).

\begin{theorem}
  \label{thm:aktgrowth}
  Let \(\aktnum_n\) denote the number of alternating tangle types whose one-crossing closure produces an alternating knot type. Then \(\lim_{n\to\infty}{\aktnum_n^{1/n}}\) exists.
\end{theorem}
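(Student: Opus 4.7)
The plan is to establish a near-supermultiplicative estimate of the form
\[
\aktnum_{m+n+c} \geq \aktnum_m \, \aktnum_n / K
\]
for constants $c \geq 0$ and $K \geq 1$ independent of $m,n$, together with an exponential upper bound. Setting $b_n := \log(K \aktnum_{n+c})$ makes $\{b_n\}$ superadditive, so Fekete's lemma implies that $b_n/n$, and hence $\aktnum_n^{1/n}$, converges to $\sup_n \aktnum_n^{1/n}$. The supremum is finite: Proposition~\ref{thm:aktknottanglinear} combined with the Sundberg--Thistlethwaite growth rate for alternating prime link types~\cite{Sundberg_1998} gives $\aktnum_n \leq C \cdot \bigl((101+\sqrt{21001})/40\bigr)^n$ up to a polynomial factor. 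Positivity of the limit is witnessed by explicit infinite families (for instance, suitable rational knot tangles with prime alternating closure).

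For the supermultiplicative estimate I would use the Sundberg--Thistlethwaite decomposition of a type~I alternating tangle into a \emph{c-net} skeleton with sub-tangles inserted at its interior vertices. Fix once and for all a small alternating c-net $C$ with at least two marked interior vertices $v_1, v_2$, chosen to have trivial symmetry group and no nontrivial flypes. Define
\[
\phi : \akt_m \times \akt_n \to \akt_{m+n+c}
\]
by substituting minimal alternating diagrams for $T_1, T_2$ at $v_1, v_2$, where $c := |C| - 2$. Because the c-net/sub-tangle decomposition of a type~I alternating tangle type is essentially unique up to flypes of the sub-tangles, the pair $(T_1, T_2)$ can be recovered from $\phi(T_1,T_2)$ up to a bounded ambiguity coming from the symmetries of $C$ together with the two slots, which gives the desired at-most-$K$-to-$1$ behavior.

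The main obstacle is verifying that $\phi(T_1,T_2)$ really lands in $\akt_{m+n+c}$, which requires three things: the resulting diagram is reduced, its one-crossing closure is a prime alternating knot type, and that closure has exactly one link component. The first two conditions are handled by taking $C$ to be a reduced prime c-net and invoking Menasco's theorem that reduced prime alternating diagrams represent prime link types; substitution of prime alternating sub-tangles into a prime c-net preserves these properties, and minimality of the resulting diagram follows because alternating reduced diagrams are already minimal. The third condition is the delicate one: the number of components in the closure depends on how the endpoint pairings forced by $C$ interact with the horizontal, vertical, or crossing class of each of $T_1$ and $T_2$. I would address this either by restricting $\phi$ to pairs in compatible classes and absorbing the loss into $K$, or by designing $C$ so that every compatible pair of insertions produces a single-component closure. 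Once $\phi$ is shown to satisfy these properties the supermultiplicative estimate follows, and with it the existence of the limit.
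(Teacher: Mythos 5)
Your overall strategy is the same as the paper's: compose two knot tangles with a bounded crossing overhead to get (near-)supermultiplicativity, combine this with the Sundberg--Thistlethwaite upper bound, and invoke a Fekete-type lemma. The difference is in the gadget. The paper uses a completely explicit ten-crossing composition (Figure~\ref{fig:altknot_supermult}) and does the case analysis by exterior strand class directly: HH, HV, and VH pairs go into one configuration, while VV pairs must be inserted into a rotated configuration precisely because the naive composition would close off an extra link component; the summands are then recovered from the unique separating dual 2-cycle, so the map is genuinely injective (\(K=1\), \(c=10\)). Your c-net substitution is a plausible alternative, and the ``bounded ambiguity from the symmetries of \(C\)'' argument for at-most-\(K\)-to-one behavior is fine, but the step you defer --- that the closure of \(\phi(T_1,T_2)\) has exactly one component and no interior closed components --- is exactly where all the content of the construction lives, and you have not exhibited a \(C\) for which it holds. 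Both of your proposed fixes are viable, though: restricting to a single compatible class pair loses only a bounded factor, since knot tangles are exactly the horizontal and vertical tangles with no interior components and these two classes are equinumerous via a quarter-turn rotation; and ``designing \(C\) per class pair'' is essentially what the paper does. For a fixed gadget and fixed classes of \(T_1,T_2\) the component count of the closure is a finite check, so this is the same proof with the key verification left as a to-do rather than a wrong approach.

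One technical slip: \(b_n := \log(K\aktnum_{n+c})\) is \emph{not} superadditive under your estimate --- unwinding \(b_{m+n}\ge b_m+b_n\) you would need \(\aktnum_{m+n+c} \ge K\,\aktnum_{m+c}\,\aktnum_{n+c}\), which both shifts the index the wrong way and puts \(K\) on the wrong side. The correct normalization is \(b_n := \log(\aktnum_{n-c}/K)\), defined for \(n\) large enough that \(\aktnum_{n-c}>0\) (which you should note holds for all sufficiently large \(n\)); alternatively, cite the Wilker--Whittington strengthening of Fekete's lemma with shift function \(f(m)=m+c\), as the paper does.
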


\begin{proof}
  Two alternating knot tangles \(T_1,T_2\) can be composed to produce a new, unique alternating knot tangle: If they are both horizontal, then composition as in in Figure~\ref{fig:altknot_supermult_horiz} produces a new horizontal knot tangle. If one is vertical and the other horizontal, the same composition produces a vertical knot tangle. In the case where both tangles are vertical the composition would produce an extra link component; instead they must be inserted into a rotated structure as in Figure~\ref{fig:altknot_supermult_vert}. In all cases the resulting tangle is uniquely decomposable; \(T_1, T_2\) can be recovered from the new diagram as separated by the unique dual 2-cycle as no flypes can alter the external structure of the tangle.

  \begin{figure}
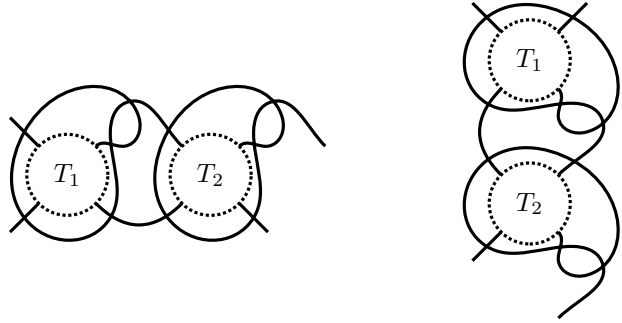

    \centering
    \begin{subfigure}[t]{0.3\textwidth}
      \centering
      \raisebox{.45in}{
        \includestandalone{figs/altknot_supermult}}
      \subcaption{Composition for HH, HV, and VH pairs of tangles}
      \label{fig:altknot_supermult_horiz}
    \end{subfigure}\hfil
    \begin{subfigure}[t]{0.3\textwidth}
      \centering
      \includestandalone{figs/altknot_supermult_vert}
      \subcaption{Composition for VV pairs of tangles}
      \label{fig:altknot_supermult_vert}
    \end{subfigure}
    \caption{A composition for alternating knot tangles which adds precisely 10 crossings.}
    \label{fig:altknot_supermult}
  \end{figure}

  This composition provides a supermultiplicativity type relation \[\aktnum_{n+m+10} \ge \aktnum_{n}\aktnum_{m}.\] Together with the upper bound by the numbers \(a_n\) of \emph{all} alternating tangles (for large enough \(n\)) from~\cite{Sundberg_1998} \[\aktnum_{n}^{1/n} \le a_n^{1/n} \sim \frac{101+\sqrt{21001}}{40}, \] a strengthening of Fekete's lemma~\cite{Wilker1979} then applies with \(f(m)=m+10\).
\end{proof}


By the above theorem and Proposition~\ref{thm:aktknottanglinear} we immediately have,

\begin{corollary}
  \label{cor:AKTnumexists}
  The exponential growth rates of alternating knot types and alternating knot tangle types are the same:
  \[ \lim_{n\to\infty}\AKTnum_n^{1/n} = \lim_{n\to\infty}\aktnum_n^{1/n}. \]
\end{corollary}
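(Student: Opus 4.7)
The plan is to deduce the corollary from Proposition~\ref{thm:aktknottanglinear} and Theorem~\ref{thm:aktgrowth} by a straightforward squeeze argument. Denote $L = \lim_{n\to\infty}\aktnum_n^{1/n}$, which exists by Theorem~\ref{thm:aktgrowth}. First I would take $n$-th roots of each term in the inequality
\[ \frac{\aktnum_{n-1}}{8(2n-3)} \le \AKTnum_n \le \frac{\aktnum_{n-1}}{2}, \]
obtaining
\[ \frac{\aktnum_{n-1}^{1/n}}{\bigl(8(2n-3)\bigr)^{1/n}} \le \AKTnum_n^{1/n} \le \frac{\aktnum_{n-1}^{1/n}}{2^{1/n}}. \]

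Next I would analyze each factor. The polynomial factors $\bigl(8(2n-3)\bigr)^{1/n}$ and $2^{1/n}$ both tend to $1$ as $n\to\infty$, because for any polynomial $p(n)$ one has $p(n)^{1/n}\to 1$. For the main factor, I would rewrite
\[ \aktnum_{n-1}^{1/n} = \bigl(\aktnum_{n-1}^{1/(n-1)}\bigr)^{(n-1)/n}, \]
note that $\aktnum_{n-1}^{1/(n-1)} \to L$ by Theorem~\ref{thm:aktgrowth}, and observe that the exponent $(n-1)/n \to 1$. Since $L$ is positive (e.g.\ because the composition in Theorem~\ref{thm:aktgrowth} yields supermultiplicative growth from $\aktnum_1 \ge 1$), continuity of $x \mapsto x^y$ gives $\aktnum_{n-1}^{1/n} \to L$.

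Combining these, both the lower and upper bounds on $\AKTnum_n^{1/n}$ tend to $L$, so $\lim_{n\to\infty}\AKTnum_n^{1/n}$ exists and equals $L$, as claimed. There is no significant obstacle; the only minor care is in handling the $(n-1)$ versus $n$ index shift and confirming $L>0$ so that raising to the power $(n-1)/n$ is continuous at the limit. Everything else is elementary asymptotic bookkeeping.
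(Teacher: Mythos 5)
Your squeeze argument is exactly the content behind the paper's one-line derivation (the paper simply states the corollary follows ``immediately'' from Theorem~\ref{thm:aktgrowth} and Proposition~\ref{thm:aktknottanglinear}), and your bookkeeping of the index shift, the polynomial factors, and the positivity of $L$ is correct. No issues.
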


With our result on exponential growth rates in hand, we now state the pattern theorem for alternating knot tangle types and alternating knot types. A tangle or knot type \([T]\) \emph{contains} a tangle diagram \(R\) if there exists some minimal diagram for \(T\) containing \(R\) as a sub-diagram.

\begin{reptheorem}{thm:aktpatterntheorem}
  Let \(R\) be a reduced type-I alternating horizontal knot tangle diagram which admits no interior flypes. Then there exist constants \(c > 0\), \(1 > d > 0\), and \(N \in \mathbb{N}\) such that for all \(n \ge N\), with \(h_n\) the number of alternating knot tangle \emph{types} with no minimal diagram having \(\ge cn\) copies of \(R\), we have the relation,
  \[ \frac{h_n}{\aktnum_n} < d^n. \]
  Furthermore, if \(H_n\) is the number of alternating knot \emph{types} with no minimal diagram having \(\ge cn\) copies of \(R\), then
  \[ \frac{H_n}{\AKTnum_n} < d^n. \]
  Namely, the fraction of alternating knot types whose minimal diagrams contain fewer than \(cn\) copies of \(R\) or its reflections is exponentially small.
\end{reptheorem}

We first equate the classes of alternating knot tangle types with a specific class of alternating knot tangle diagrams. The proof then becomes entirely diagrammatic, and relies only on an attachment construction of any such tangle \(R\).

Using flype equivalence of minimal prime alternating diagrams~\cite{Menasco_1991,Menasco_1993}, fix once and for all a diagram representation \(T\) as follows:

\begin{figure}
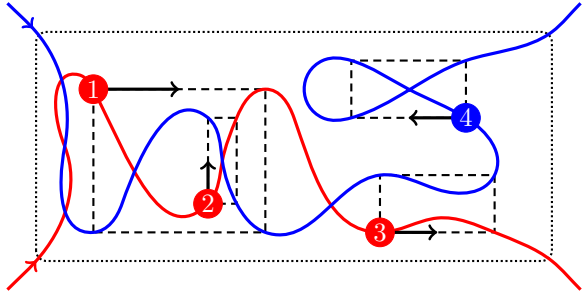

  \centering
  \includestandalone{figs/altknot_stdori}
  \caption{Following the string with a well-defined orientation, all horizontal tangle sums can be oriented left-to-right in a well-defined manner. Dashed boxes correspond to maximal horizontal tangle sums. The paths of the strings is a rough depiction, and is not meant to imply any diagram structure beyond entries into and out of tangle sum levels.}
  \label{fig:altknot_stdori}
\end{figure}

Starting with the lower-left exterior leg of the tangle \(T\), follow the string component around the tangle diagram one edge at a time. Upon first entry of a maximal non-trivial horizontal tangle sum, fix the left-to-right orientation of the tangle sum such that the edge being traversed is on the left side of the tangle sum (a horizontal direction is already determined by the sum decomposition). Continue traversing the tangle, until reaching the lower-right (resp. upper-left) exterior leg if \(T\) is horizontal (resp. vertical). If some maximal horizontal tangle sums are still unoriented, take the upper-left (resp. lower-right) leg, and follow the strand edge-by-edge applying the same orientation. See Figure~\ref{fig:altknot_stdori} for an example of this orientation procedure.

Using the orientations of all maximal tangle sums in \(T\), apply flypes to each tangle sum region so that all flypable crossings are on the right hand side, as in Figure~\ref{fig:altknot_stdform}. The resulting diagram is in \emph{standard form}.

\begin{lemma}
  Every reduced alternating knot tangle type admits precisely one standard form diagram. This provides a bijection between the class of all alternating reduced knot tangle types and the class of standard form alternating reduced knot tangle diagrams.
\end{lemma}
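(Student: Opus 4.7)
The plan is to prove existence and uniqueness of the standard form separately, leveraging the Menasco--Thistlethwaite flyping theorem, which guarantees that any two reduced alternating diagrams of the same tangle type differ only by a sequence of flypes fixing the boundary of the ambient disk. The bijection between types and standard form diagrams then follows immediately, since each standard form diagram trivially represents its own type.

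For existence, I would start with any reduced alternating diagram $D$ representing a knot tangle type $[T]$. The Sundberg--Thistlethwaite decomposition identifies each maximal non-trivial horizontal tangle sum in $D$. Applying the traversal procedure to $D$ yields a left-to-right orientation for each such sum; then, within each oriented tangle sum, flypes may be used to push every flypable crossing to the right of all non-trivial sub-tangles. Recursing into each sub-tangle, this procedure terminates and yields a diagram of $[T]$ in standard form.

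For uniqueness, suppose $D_1$ and $D_2$ are standard form diagrams both representing $[T]$. By Menasco--Thistlethwaite they are related by a sequence of flypes. Each flype occurs inside a type-II tangle sum whose decomposition structure (the multiset of sub-tangles and flypable crossings, together with the cyclic planar arrangement of non-trivial pieces) is canonical to $[T]$. The only remaining freedom is the positions of the flypable crossings relative to the sub-tangles and, recursively, the internal arrangements within the sub-tangles themselves. Standard form resolves the first freedom by stacking all flypable crossings on the right of each tangle sum, and an induction on the depth of the decomposition tree handles the recursive freedom.

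The subtle point---and what I expect to be the main obstacle---is showing that the left-to-right orientation assigned by the traversal procedure depends only on $[T]$, not on the particular diagram representative. The cyclic order of sub-tangles along a horizontal tangle sum is canonical, but ``left'' versus ``right'' is a binary choice that must agree across all flype-equivalent representatives. I would argue this by noting that a flype performed inside one maximal horizontal tangle sum cannot alter which of the four external legs of any other maximal horizontal tangle sum is first crossed by the string emerging from the lower-left exterior leg, since the string's behavior outside the flyped region is unchanged; and a flype internal to a given tangle sum clearly does not alter its own external entry edges. Hence the first-entry edge of each maximal horizontal tangle sum is a topological invariant of $[T]$, which fixes the orientation uniquely and makes the whole construction well-defined on tangle types.
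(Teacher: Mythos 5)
Your proposal is correct and follows essentially the same route as the paper: existence by applying the traversal orientation and pushing flypable crossings rightward, and uniqueness by combining the Menasco--Thistlethwaite flyping theorem with the canonicity of the Sundberg--Thistlethwaite tangle-sum decomposition and an induction on composition levels. Your explicit verification that the first-entry leg (and hence the left-to-right orientation) of each maximal horizontal tangle sum is a flype invariant is a point the paper's proof leaves implicit, so it is a welcome refinement rather than a divergence.
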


\begin{proof}
  Let \(T_1, T_2\) be two standard form alternating reduced knot tangle diagrams for the alternating reduced knot tangle type \([T]\). All alternating reduced tangle diagrams decompose by the discussion of Section~\ref{sec:definitions} as a finite composition of alternating reduced tangle diagrams, with the property that any flype equivalences are confined to the specific composition level containing the flyping crossing.

  The exterior composition levels of \(T_1, T_2\) are either both tangle sums or not. If they are not both tangle sums, then they must be the same as flyping can only happen in tangle sums. If they are, then an orientation of the sum has been decided by the scheme above. As \(T_1\) and \(T_2\), all flypable crossings are on the right-hand side of the sum. Furthermore, as the tangle summands cannot be permuted, they are in the same order on the left-hand side. Hence, in this case the exterior composition levels of both diagrams are the same.

  A finite recursion using the above argument yields that both \(T_1\) and \(T_2\) are identical diagrams. Hence \([T]\) has a unique standard form.
\end{proof}

\begin{proof}[Proof of Theorem~\ref{thm:aktpatterntheorem}]
  
  By the above lemma, we may view \(\akt\) as a class of diagrams. To prove a pattern theorem for \(R\) in \(\akt\), it now remains to prove that there is a viable attachment scheme of \(R\) into diagrams in standard form that preserves the standard form.

  The definition of \(\Xtang\) depends on whether \(R\) is a crossing tangle (see Figure~\ref{fig:altknot_conwayshield_cross}) or not (see Figure~\ref{fig:altknot_conwayshield_vert}). The attachment scheme is summarized in Figure~\ref{fig:altknot_tangann}. Any rooted dual 4-cycle in \(T\) can be replaced by the annulus \(\Xtang\) so that the lower-left corner of \(\Xtang\) oriented inward and the root of the dual cycle agree. Notice that the structure of \(\Xtang\) has that each strand going in a given corner of the outer annulus boundary comes out \emph{the same} respective corner on the interior.

  \begin{figure}
    \centering
    \includestandalone{figs/altknot_shieldconway_cross}
    \caption{Definition of \(\Xtang\) for crossing tangles \(R\).}
    \label{fig:altknot_conwayshield_cross}
  \end{figure}

  \begin{figure}
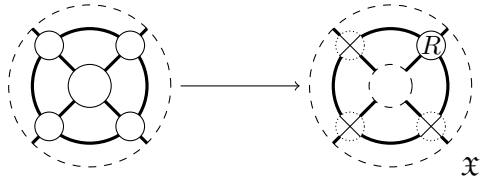

    \centering
    \includestandalone{figs/altknot_shieldconway_vert}
    \caption{Definition of \(\Xtang\) for horizontal and vertical tangles \(R\).}
    \label{fig:altknot_conwayshield_vert}
  \end{figure}

  We now show that, at minimum, there are \(n\) attachment locations for any given standard form tangle diagram which (a) produce a new standard form tangle diagram and (b) may be performed in parallel. An attachment location is chosen by, within a maximal horizontal tangle sum domain with \(k \ge 1\) flypable crossings in standard form, picking \(1 \le \ell \le k\) and identifying the dual 4-cycle that passes immediately to the left of the leftmost (first) flypable crossing and also to the right of the \(\ell\)th flypable crossing. Additionally, for any crossing which is not contained in a horizontal tangle sum, a single rooted dual 4-cycle may be chosen for insertion about the crossing. The root of the dual cycle is taken to be the lower-left corner, with respect to the horizontal orientation given by the standard form. Such an insertion can be performed in parallel; after choosing \(\{\ell_i\}_{i=1}^k\), take the \(\{\ell_i\}_{i=1}^k\) as sorted and identify the unique \(k\) concentric dual cyles, all of which pass through to the left of the first flypable crossing and so that the \(i\)th passes to the right of the \(\ell_i\)th flypable crossing. Notice that the number of attachment sites which we have identified is,
  \[
    \sum_{\textrm{all max tangle sums \(\gamma\)}}{\textrm{flypable crossings in \(\gamma\)}} = \#{\textrm{ crossings in diagram}} = n,
  \]
  as every flypable crossing is contained in precisely one maximal tangle sum.
  
  \begin{figure}
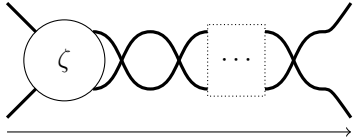

    \centering
    \includestandalone{figs/altknot_stdform}
    \caption{Any sub-portion of a tangle counted by \(\gamma\), i.e.\ one which is a horizontal sum of tangles, can be realized in standard form where all flypable crossings are on the right, and a tangle \(\zeta\) consisting of no flypable crossing summands on the left.}
    \label{fig:altknot_stdform}
  \end{figure}




  \begin{figure}
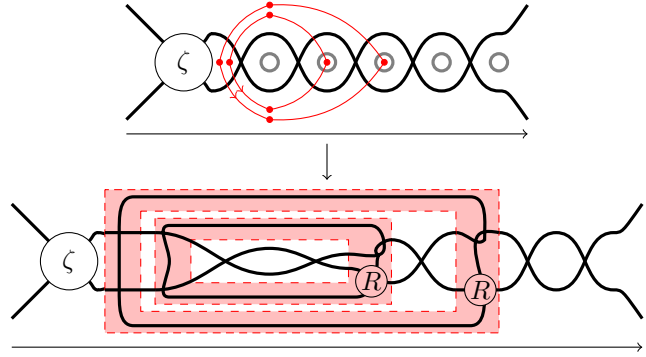

    \centering
    \includestandalone{figs/altknot_tangcycle}
    \caption{Insertion of a crossing tangle \(R\) at marked rooted dual-cycles. By our scheme, there are as many choices for dual cycles as crossings, determined entirely choice of face. Faces which can be chosen are marked with gray hollow circles. by Insertion of a horizontal or vertical tangle is the same, with a different inserted annulus \(\Xtang\).}
    \label{fig:altknot_tangann}
  \end{figure}
  
  After identifying any number of such rooted dual cycles, the tangle \(R\) is inserted by replacing the oriented dual cycle with the annulus \(\Xtang\), as in Figure~\ref{fig:altknot_tangann}. What is most important for this proof is that if the original tangle diagram is in standard form, \emph{so too is the resultant diagram}, no matter how many copies of \(R\) are attached in parallel---this follows from (a) that \(R\) and so \(\Xtang\) admit no flypes, (b) the annulus \(\Xtang\) does not apply any nontrivial permutation to the exterior and interior termini of its strands, and (c) there are no new horizontal flypes to perform as all dual cycles are chosen with regards to the orientation of the standard form diagram.

  There are then four properties we must check for this attachment scheme:
  \begin{enumerate}
  \item \emph{At least \(n\) non-conflicting places of attachment exist.} There are precisely \(n\) attachment locations by the scheme described as each crossing appears in exactly one maximal horizontal tangle sum. As we have described how to perform multiple simultaneous attachments, they are non-conflicting.
    
  \item \emph{Only diagrams in \(\akt\) are produced.} Such an attachment can only produce new alternating knot diagrams, and the resulting diagrams are in standard form by the discussion above.

  \item \emph{For any diagram produced as such the copies of \(R\) may be identified and they are vertex disjoint.} Vertex disjointness is guaranteed by the primality of \(R\) and the annular shield \(\Xtang\). Identification of an attached copy of \(R\) in a diagram in standard form involves precisely identifying the copy of \(\Xtang\) that was added and its lower-left corner, which can be determined by the structure of \(\Xtang\).

  \item \emph{Given any number of copies of \(R\) which have been inserted, the original diagram and associated places of attachment are uniquely determined.} Given an annular region \(\Xtang\), collapsing the annulus into a rooted dual cycle both removes the copy of \(R\) (producing a standard form diagram to which such structure could have been added) as well as the rooted dual cycle where \(R\) was attached.
  \end{enumerate}

  This proves that the attachment of \(R\) satisfies the appropriate hypotheses. Hence the pattern theorem for \(R\) in alternating knot tangle types follows by the general diagram pattern theorem; Theorem 6 of~\cite{Chapman2016}. By the linear bounds on the relationship between the numbers of alternating tangle types and alternating knot types of Proposition~\ref{thm:aktknottanglinear}, the latter equation for alternating knot types holds as well.
\end{proof}

Nearly the same proof applies in the case of alternating link types, but we remind the reader that the enumerative and structural results of Sundberg and Thistlethwaite~\cite{Sundberg_1998} and Thistlethwaite~\cite{Thistlethwaite_1998} are more descriptive than the comparatively vague pattern theorem.

\subsection{The rarity of alternating knot types}
\label{sec:rarity}

The pattern theorem says much about the asymptotic structure of random prime alternating knot types. As our motivating application, we prove that alternating knots are rare:
\begin{reptheorem}{thm:altknotrare}
  All but exponentially few prime knot types are nonalternating. This furthermore implies that all but exponentially few knot types (prime or composite) are nonalternating.
\end{reptheorem}

\begin{proof}
  Consider the pattern tangle \(R\) and the nonalternating tangle \(\overline R\) in Figure~\ref{fig:altknot_altpattern}. The pattern theorem guarantees that there is \(c > 0\) so that almost every alternating knot tangle type \([T]\) has a tangle diagram \(T\) with at least \(cn\) instances of \(R\). By replacing any number of instances of \(R\) in an alternating diagram \(T\) by the tangle \(\overline R\), one obtains a non-alternating knot tangle diagram in a superclass of knot tangle diagrams \(\Dknot\) that still satisfies the Tait flyping conjecture. As a tangle diagram has no symmetries any diagram so produced corresponds to a \emph{distinct} knot type among all such diagrams produced from \(T\). Furthermore, any such diagrams produced from a different alternating knot tangle type \([T']\) will be distinct, by the solution to the flyping conjecture.

  \begin{figure}
    \centering
    \begin{subfigure}{.4\linewidth}
      \centering
      \includestandalone{figs/altknot_rtangle}
      \subcaption{The type-I tangle \(R\), which occurs linearly often in alternating knot types by the pattern theorem.}
    \end{subfigure}\hfil
    \begin{subfigure}{.4\linewidth}
      \centering
      \includestandalone{figs/altknot_rbartangle}
      \subcaption{The tangle \(\overline R\), equally likely as \(R\), can be found in the superclass \(\Dknot\) that still satisfies the flype conjecture.}
    \end{subfigure}
    \caption{The tangles \(R\) and \(\overline R\).}
    \label{fig:altknot_altpattern}
  \end{figure}

  For all but exponentially few knot tangle types \([T]\), we thus can produce at least \(2^{cn}\) unique nonalternating knot tangle types. Hence alternating knot tangles are rare among all knot tangles. Application of the same argument of Sundberg and Thistlethwaite~\cite{Sundberg_1998}, Section~3 shows then that alternating knot types are also rare among all knot types.
\end{proof}

\subsection{Subknots in alternating knot types}
\label{sec:subknots}

To demonstrate further the type of structural knowledge the pattern theorem provides, we provide new insight on questions of Millett and Jablan~\cite{Millett_2016,Millett_2017}. We note that the original questions are posed in the context of all minimal prime knot diagrams, for which an equivalent theorem follows by instead applying Theorem~\ref{thm:akdpatterntheorem}. The new result for alternating prime knot types is more powerful:
\begin{theorem}
  Let \([K]\) be an alternating knot type. Then all but exponentially few prime alternating knot types \([K]\) have the property that all minimal diagrams for \([L]\) contain \([K]\) as a subknot and \([K]\) as a slipknot.
\end{theorem}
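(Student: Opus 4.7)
The plan is to apply the pattern theorem (Theorem~\ref{thm:aktpatterntheorem}) with a tangle $R_K$ specially constructed from a minimal alternating diagram $D_K$ of $[K]$ so that every occurrence of $R_K$ in an ambient minimal alternating diagram is automatically a subknot of type $[K]$ and moreover a slipknot of type $[K]$. The overall scheme mirrors the proof of Theorem~\ref{thm:altknotrare}: identify a pattern that appears linearly often by Theorem~\ref{thm:aktpatterntheorem}, then read off the desired structural consequence, which in this case is topological rather than enumerative.

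Concretely, I would build $R_K$ from $D_K$ as follows. Isotope $D_K$ into a small disk, cut one edge of $D_K$ to create two endpoints on the disk boundary, and add an unknotted ``shield'' arc that passes transversely through the region in an alternating fashion, contributing the other two of the four tangle endpoints. The shield is to be routed so that (i) the extended diagram is alternating and reduced, (ii) there are no interior flypes, (iii) there is no dual 2-cycle separating any proper sub-tangle, which guarantees that $R_K$ is prime and 3-edge-connected, and (iv) the outermost decomposition of $R_K$ is type-I and horizontal. By construction, $R_K$ contains $D_K$ as an interior sub-tangle, so closing the cut edge inside the disk recovers a diagram of $[K]$; moreover, replacing the interior $D_K$-portion of $R_K$ by a trivial horizontal arc leaves $R_K$ ambient-isotopic to the trivial $4$-tangle. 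Theorem~\ref{thm:aktpatterntheorem} then guarantees that, outside an exponentially small set of $n$-crossing prime alternating knot types, every minimal diagram $D_L$ for $[L]$ contains at least $cn$ copies of $R_K$ or a reflection; for any such copy, cutting $D_L$ along the tangle boundary and closing the interior arc produces a subknot of type $[K]$, while the triviality of the shielded $4$-tangle shows that the interior arc can be replaced by a trivial connecting arc with the ambient knot type preserved as $[L]$, which is exactly the defining property of a slipknot. Reflections of $R_K$ encode the same unoriented substructure and so do not affect the argument.

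The main obstacle is the shield construction of the first step: one must exhibit, for arbitrary alternating $[K]$, a shield arc making $R_K$ meet every hypothesis of Theorem~\ref{thm:aktpatterntheorem} simultaneously. These hypotheses (prime, 3-edge-connected, type-I, horizontal, no interior flypes, reduced) interact non-trivially with the alternating constraint, and the shield must in particular cross $D_K$ enough times to destroy every potential dual 2-cycle and every candidate flype domain without introducing new reducible structures. A workable recipe is to take the shield as a simple arc meeting a spanning collection of interior faces of $D_K$ with alternating over/under signs, and then to verify that any surviving 2-cycle or flype domain would have to live entirely inside $D_K$, contradicting the primality and minimality of $D_K$. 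Once this combinatorial lemma about $R_K$ is established, the remainder is a direct application of the pattern theorem together with the subknot/slipknot bookkeeping sketched above.
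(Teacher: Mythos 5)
There is a genuine gap, and it is in the slipknot half of the claim. Your tangle $R_K$ carries the opened diagram $D_K$ as a single pass along one of its strands, so that strand is a locally knotted ball--arc pair: the local knot $[K]$ on it cannot be cancelled by extending the subsegment into the rest of $L$. Consequently every subarc of $L$ containing your $D_K$-segment closes to a knot containing $[K]$ as a connect summand, and no containing subarc ever closes to the unknot --- which is precisely what the slipknot definition requires. Your appeal to ``replacing the interior $D_K$-portion by a trivial arc'' conflates modifying the diagram (which changes the ambient knot type, not preserves it) with exhibiting a nested pair of subsegments of the actual knot $L$ whose inner closure is $[K]$ and whose outer closure is trivial. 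The paper avoids this by a different construction: it \emph{triples} each crossing of the opened diagram $K_o$ into an alternating weave, producing a 6-tangle with legs colored ABCCBA, and then caps a C leg to an A leg (adding one crossing with the B strand). The capped A--C strand traverses the opened knot pattern and then returns through it, so that strand is trivial rel boundary inside the tangle while still containing a subsegment (one parallel pass) that closes to $[K]$; this is exactly the ephemeral, ``pull-the-end-back-through'' structure a slipknot needs, and it simultaneously delivers the subknot.

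A secondary weakness is that your shield-arc lemma --- that a single auxiliary arc can be routed through an arbitrary alternating $D_K$ so that the result is alternating, reduced, prime, 3-edge-connected, type-I, horizontal, and flype-free --- is exactly the hard combinatorial content of your approach, and you leave it as an unproved ``workable recipe.'' The tripling construction sidesteps this too: the dense weave of the tripled crossings makes the resulting tangle automatically at least 3-edge-connected, nontrivial, alternating, and flype-free, so the hypotheses of Theorem~\ref{thm:aktpatterntheorem} are verified directly rather than asserted. If you want to salvage your route, you would at minimum need to (i) prove the shield lemma and (ii) replace the single pass of $D_K$ by a doubled or tripled pass with a cap, at which point you have essentially reconstructed the paper's argument.
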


\begin{proof}
  Given a construction of a type-I tangle \(S_K\) which introduces a slipknot of \([K]\), the pattern theorem yields the result. Figure~\ref{fig:altknot_doubleslip} describes this procedure. Let \(K\) be a minimal alternating diagram for \([K]\) and let \(K_o\) be an opening of \(K\) (that is, slice one edge into two loose legs).

  By changing each local neighborhood about a crossing into an alternating weaved ``multi-crossing'' as in Figure~\ref{fig:altknot_doubleslip_cross}, one obtains a 6-tangle whose legs counterclockwise are colored ABCCBA. By joining one pair of two adjacent C--A legs as in Figure~\ref{fig:altknot_doubleslip_cap} (introducing a new crossing with the B strand in compliance with the alternating structure) one obtains the alternating crossing tangle \(S_K\) admitting no flypes which can be seen to be type-I, as it is both nontrivial and at least 3-edge-connected.   Figure~\ref{fig:trefoil_subslip} shows as an example \(S_{3_1}\), which introduces trefoil subknots and slipknots. Application of Theorem~\ref{thm:aktpatterntheorem} together with \(S_K\) yields the result.
  \begin{figure}
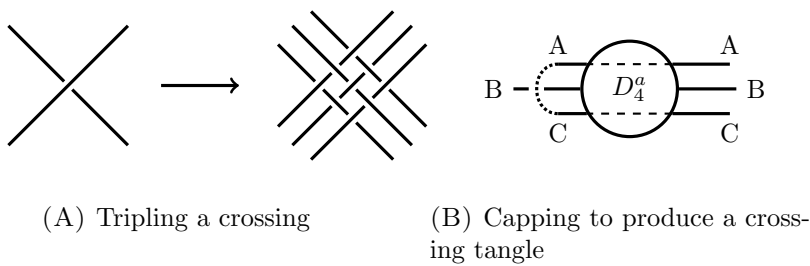

    \centering
    \begin{subfigure}[t]{0.4\textwidth}
      \centering
      \includestandalone{figs/tang_tripling}
      \subcaption{Tripling a crossing}
      \label{fig:altknot_doubleslip_cross}
    \end{subfigure}\hfil
    \begin{subfigure}[t]{0.4\textwidth}
      \centering
      \raisebox{.15in}{
      \includestandalone{figs/tang_tripling_cap}}
    \subcaption{Capping to produce a crossing tangle}
    \label{fig:altknot_doubleslip_cap}
    \end{subfigure}

    \caption{Rules for transforming a reduced open alternating knot diagram for \([K]\) into a tangle \(S_K\) that introduces \([K]\) slipknots and subknots into a larger diagram.}
    \label{fig:altknot_doubleslip}
  \end{figure}
 
  \begin{figure}
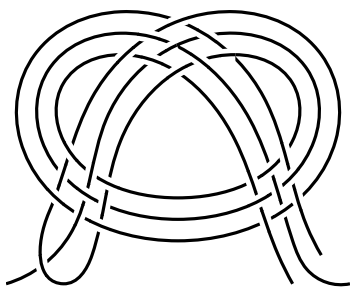

    \centering
    \includestandalone{figs/altknot_trefslip}
    \caption{A tangle which introduces a trefoil slipknot segment into an alternating prime knot diagram (and hence also a trefoil subknot segment).}
    \label{fig:trefoil_subslip}
  \end{figure}
\end{proof}

\section{Conclusion}
\label{sec:conclusion}

By restricting our view to only alternating prime knot types, we are able to prove an asymptotic structure theorem (the pattern theorem) for an infinite family of knot types (alternating knot types), rather than simply diagrams. We have then applied this pattern theorem to show that alternating knot types are rare.

The proofs for this subclass of knot types relies heavily on strong results about the structure of minimal diagrams under the Tait flyping conjecture. It is likely that similar results can be proved about other subclasses of knot types satisfying the Tait flyping conjecture; our restriction to alternating knot types only stems from that this subclass is the most well-known and well-studied such class. Extensions to other classes of knot types, such as satellite or hyperbolic knot types, requires additional work, as they do not satisfy the flyping conjecture. We hope that further understanding of knot diagrams can yield results in these cases, as there is evidence~\cite{malyutin2016question} that they are directly related to the long-standing question as to whether crossing number is additive under connected summation~\cite{Kirbyproblems}.

\section*{Acknowledgements}

The author is grateful to both Rob Kusner and Malik Obeidin for alerting him to an error in the statement of the main theorem.

\FloatBarrier

\begingroup
\raggedright{}
\sloppy
\printbibliography{}
\endgroup

\hrulefill

\end{document}
